\title{Two-Dimensional Systolic Complexes Satisfy
  \texorpdfstring{Property~A}{Property A}}
\author{Nima Hoda}
\address{Department of Mathematics and Statistics, McGill University \\
  Burnside Hall, Room 1005 \\
  805 Sherbrooke Street West \\
  Montreal, QC, H3A 0B9, Canada}
\email{nima.hoda@mail.mcgill.ca}
\author{Damian Osajda}
\address{Instytut Matematyczny,
	Uniwersytet Wroc\l awski\\
	pl.\ Grun\-wal\-dzki 2/4,
	50--384 Wroc{\l}aw, Poland}
\address{Institute of Mathematics, Polish Academy of Sciences\\
	\'Sniadeckich 8, 00-656 War\-sza\-wa, Poland}
\email{dosaj@math.uni.wroc.pl}
\date{\today}
\keywords{systolic complex, CAT(0) triangle complex, property A,
  boundary amenability, exact group}
\subjclass[2010]{20F65, 20F69, 57M20}
\theoremstyle{plain}
\newtheorem*{mainthm*}{Main~Theorem}
\newcommand{\mainthmlabel}[1]{\label{mainthm:#1}}
\DeclareMathOperator{\CAT}{CAT}
\newcommand{\Lemitmrefp}[2]{Lemma~\lemref{#1}(\itmref{#2})}
\begin{document}

\begin{abstract}
  We show that \texorpdfstring{$2$-dimensional}{2-dimensional}
  systolic complexes are quasi-isometric to quadric complexes with
  flat intervals.  We use this fact along with the weight function of
  Brodzki, Campbell, Guentner, Niblo and Wright
  \texorpdfstring{\cite{Brodzki:2009}}{} to prove that
  \texorpdfstring{$2$-dimensional}{2-dimensional} systolic complexes
  satisfy \texorpdfstring{Property~A}{Property A}.
\end{abstract}

\maketitle

\section{Introduction}

Property~A is a quasi-isometry invariant of metric spaces introduced
by Guoliang Yu in his study of the Baum-Connes conjecture
\cite{Yu2000}.  It may be thought of as a non-equivariant version of
amenability.  As is the case for amenability, Property~A has plenty of
equivalent formulations.  In particular, for finitely generated
groups, it is equivalent to the exactness of the reduced
$C^{\ast}$--algebra of the group and also to the existence of an
amenable action on a compact space \cite{HigsonRoe2000,Ozawa2000}.
Property~A implies coarse embeddability into Hilbert space and hence
the coarse Baum-Connes Conjecture and the Novikov Conjecture.  Classes
of groups for which Property~A holds include Gromov hyperbolic groups
\cite{Adams1994}, CAT(0) cubical groups \cite{CampbellNiblo2005}, and
uniform lattices in affine buildings \cite{Campbell2009}, but it is an
open question whether it holds for all CAT(0) groups.

In the current article we prove the following.
\begin{mainthm*}
  \mainthmlabel{mainthm} Two-dimensional systolic complexes satisfy
  Property~A.
\end{mainthm*}

A $2$-dimensional \defterm{systolic complex} can be defined as a
$2$-dimensional simplicial complex which is CAT(0) when equipped with
a metric in which each triangle is isometric to an equilateral
Euclidean triangle.  The class of isometry groups of such complexes is
vast.  It contains many Gromov hyperbolic groups.  It also contains
lattices in $\tilde A_2$ buildings, which were already proved to
satisfy Property~A by Campbell \cite{Campbell2009}.  Some of these
groups satisfy interesting additional properties such as Kazhdan's
property (T).  Notably, there are numerous well developed techniques
for constructing groups acting on $2$-dimensional systolic complexes
(with various additional features) making them a rich source of
examples.  For instance, given a finite group $F$ and a generating set
$S \subseteq F \setminus \{1\}$ whose Cayley graph $\Gamma(F,S)$ has
girth at least 6, Ballmann and {\' S}wi{\c a}tkowski \cite[Theorem~2
and Section~4]{Ballmann:1997} construct a canonical infinite
$2$-dimensional systolic complex $X$ whose oriented triangles are
labeled by elements of $S \cup S^{-1}$ and in which the link of every
vertex is isomorphic to $\Gamma(F,S)$ with labels induced from the
triangles.  The labeled automorphisms of $X$ act simply transitively
on the oriented $1$-simplices of $X$ and if $(F,S)$ has a relation of
the form $(st)^3$ with $s,t\in S$ then $X$ has flat planes and so is
not hyperbolic.  This construction is a particular case of a
development of a complex of groups as described in Bridson and
Haefliger \cite[Example~4.19(2) of
Chapter~III.$\mathcal{C}$]{bridson1999metric}.

We prove the Main~Theorem by showing that $2$-dimensional systolic
complexes are quasi-isometric to quadric complexes whose intervals
with respect to a basepoint are $\CAT(0)$ square complexes
(\Thmref{sqqi}, \Thmref{sqquad} and \Thmref{sqfi}).  This allows us to
apply the weight function and uniform convergence argument of Brodzki,
Campbell, Guentner, Niblo and Wright \cite{Brodzki:2009} in their
proof that finitely dimensional $\CAT(0)$ cube complexes satisfy
Property~A (\Thmref{qfia}).

\subsection*{Acknowledgements}
The authors would like to thank Jacek {\' S}wi{\c a}tkowski for some
helpful discussions on the construction described above of
$2$-dimensional systolic developments.  N.H.\ was partially funded by
an NSERC CGS M.  D.O.\ was partially supported by (Polish) Narodowe
Centrum Nauki, grant no.\ UMO-2015/18/M/ST1/00050.  Parts of this
research were carried out while D.O.\ was visiting McGill University
and while N.H.\ was visiting the University of Wroc\l aw.  The authors
would like to thank both institutions for their hospitality.

\section{Preliminaries}

For basic algebraic topological notions such as those of \defterm{CW
  complexes} and \defterm{simple connectedness} we refer the reader to
Hatcher \cite{Hatcher:2002}.  A \defterm{combinatorial map} $X \to Y$
between CW complexes is one whose restriction to each cell of $X$ is a
homeomorphism onto a cell of $Y$.  All graphs considered in this paper
are simplicial.  We consider metrics only on the $0$-skeleton $X^0$ of
a cell complex $X$ as induced by the shortest path metric on the
$1$-skeleton $X^1$.  The \defterm{metric ball} $B_r(v)$ (respectively,
\defterm{metric sphere} $S_r(v)$) of radius $r$ centered at a vertex
$v$ in a complex is the subgraph induced in the $1$-skeleton by the
set of vertices of distance at most (respectively, exactly) $r$ from
$v$.  The \defterm{girth} of a graph is the length of the embedded
shortest cycle.  The \defterm{link} of a vertex $v$ in a
$2$-dimensional simplicial complex is the graph whose vertices are the
neighbours of $v$ and where two vertices are joined by an edge if they
span a triangle together with $v$.

\subsection{Property A}

Rather than defining Property~A in full generality, we give the
characterization for graphs of Brodzki, Campbell, Guentner, Niblo and
Wright \cite[Proposition~1.5]{Brodzki:2009}.  A graph $\Gamma$
satisfies \defterm{Property~A} iff there exists a sequence of
constants $(C_n)_{n\in\N}$ and a family of functions
$f_{n,v}\colon \Gamma^0 \to \N$ indexed by $\N \times \Gamma^0$ such
that the following conditions hold.

\begin{enumerate}
\item $f_{n,v}$ is supported on $B_{C_n}(v)$.
\item $\frac{||f_{n,v} - f_{n,v'}||_1}{||f_{n,v}||_1} \to 0$ uniformly
  over all pairs of vertices $(v,v')$ joined by an edge.
\end{enumerate}

\subsection{Two-dimensional systolic complexes}
A \defterm{$2$-dimensional systolic complex} is a simply connected
$2$-dimensional simplicial complex in which the girth of the link of
every vertex is at least 6.

The following are well known properties of systolic complexes.  See
Chepoi \cite[Theorem~8.1]{Chepoi:2000} and Januszkiewicz and
{\'S}wi{\c{a}}tkowski \cite[Lemma~7.7]{Januszkiewicz:2006}.

\begin{lem}
  \lemlabel{systlem} Let $Y$ be a $2$-dimensional systolic complex.
  \begin{enumerate}
  \item \itmlabel{sphtfree} Metric spheres in $Y$ are triangle-free.
  \item \itmlabel{ballneighb} Let $u,v \in Y^0$.  Let $w,x \in Y^0$ be
    neighbours of $v$ that are closer to $u$ than is $v$.  Then $w$
    and $x$ are joined by an edge.
  \item \itmlabel{tricond} (Triangle Condition) Let $u,v,w \in Y^0$
    such that $v$ and $w$ are joined by an edge and they are
    equidistant to $u$.  Then there exists $x \in Y^0$ adjacent to $v$
    and $w$ and closer to $u$ than are $v$ and $w$.
  \end{enumerate}
\end{lem}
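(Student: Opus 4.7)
The plan is to prove these three interlocking facts by induction on the radius $r = d(u,\cdot)$, leveraging the girth axiom on links together with simple connectedness of $Y$. In outline, I would first establish (3), then derive (2) from it by a similar argument, and finally deduce (1) from (2) and (3). Throughout, the central tool is a minimal simplicial disk diagram $\varphi \colon D \to Y$ filling a given edge loop, chosen so that it is reduced: the link cycle at each interior vertex $x$ of $D$ maps injectively into the link of $\varphi(x)$, and so by the girth axiom must have length at least $6$. Combinatorial Gauss--Bonnet (each triangle contributing $\pi/3$ at each corner) then yields strong constraints on where curvature can concentrate.

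For (3), given adjacent $v, w$ with $d(u,v) = d(u,w) = r$, concatenate a geodesic from $u$ to $v$ and a geodesic from $u$ to $w$ with the edge $vw$ to form a loop of length $2r+1$, and fill it minimally. The curvature bookkeeping forces some boundary vertex of $D$ to have only a single interior neighbour; since the geodesic arcs cannot be shortened, this vertex must be $v$ or $w$, and the resulting triangle in $D$ produces a common neighbour $x$ of $v$ and $w$ lying on both geodesics, hence at distance $r-1$ from $u$. For (2), given $w, x$ adjacent to $v$ and both closer to $u$, apply the same diagram method to the loop composed of geodesics from $u$ to $w$ and from $u$ to $x$ together with the path $w, v, x$; minimality and the girth condition rule out any alternative to $w$ and $x$ themselves being joined by an edge.

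Part (1) then follows from (2) and (3) by induction on $r$. The base $r = 1$ is immediate, since any triangle in $S_1(u)$ would be a triangle in the link of $u$ itself. For the inductive step, a triangle $abc$ in $S_r(u)$ yields, via (3) applied to each of the edges $ab$, $bc$, $ca$, vertices $x, y, z \in S_{r-1}(u)$ adjacent to the corresponding pair. Applying (2) in the link of $a$ to the pair $x, z$ (and likewise at $b$ and $c$), one finds that $x, y, z$ are pairwise adjacent whenever they are distinct. If they are distinct, this gives a triangle in $S_{r-1}(u)$, contradicting the inductive hypothesis; if two of them coincide, then $abc$ embeds in the link of a single vertex, contradicting the girth axiom.

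The main obstacle is the disk-diagram step underlying (3) and (2): one must verify that the reduction and Gauss--Bonnet argument genuinely produces a \emph{shared} neighbour of $v$ and $w$ on both geodesics, rather than merely some low-valence vertex elsewhere on the boundary. Once this local curvature analysis is in hand, the remaining deductions are straightforward combinatorial manipulations.
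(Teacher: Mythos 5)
First, a point of comparison: the paper does not prove this lemma at all — it states these as well-known facts and cites Chepoi (Theorem~8.1) and Januszkiewicz--{\'S}wi{\c{a}}tkowski (Lemma~7.7) — so there is no in-paper argument to match yours against, and your proposal has to stand on its own. Your deduction of (1) from (2) and (3) is correct (modulo flagness of $Y$, which the paper's definition of systolic leaves implicit but uses elsewhere). The problem is in the disk-diagram proof of (3) and (2), at exactly the spot you flag as ``the main obstacle'': the Gauss--Bonnet bookkeeping does not close in the way you describe, and you leave it unresolved.

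Concretely, for a triangulated disk $D$ all of whose interior vertices have degree at least $6$, the defect formula gives $\sum_{p\in\partial D}(4-\deg p)\ge 6$, with each boundary vertex contributing at most $2$. Geodesity of the two arcs only excludes defect-$2$ vertices (degree $2$ in $D$, whose two boundary neighbours would be adjacent, shortening the geodesic) from the arc interiors; it does \emph{not} exclude defect-$1$ vertices (degree $3$, i.e.\ exactly ``a single interior neighbour''), of which there can be arbitrarily many, each contributing $1$ to the total. So nothing in your count forces positive defect at $v$ or $w$. Worse, defect $1$ at $v$ would not suffice anyway: it produces an \emph{interior} vertex $z$ adjacent to $v$ and $w$ with no control on $d(u,z)$. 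What you need is defect $2$ at $v$ (or $w$): then the unique triangle of $D$ at $v$ is spanned by $v$, $w$ and the predecessor $v^-$ of $v$ on its geodesic, so $x=v^-$ is adjacent to both and lies at distance $r-1$ from $u$ (it need not lie on both geodesics, contrary to what you assert). The standard repair is to minimize the area of the filling diagram over the choice of the two geodesics as well: a degree-$3$ vertex $p$ interior to an arc can then be pushed across its two triangles — its interior neighbour is adjacent to both boundary neighbours of $p$, yielding an equally long geodesic bounding a diagram with two fewer triangles. With interior-arc defects thereby forced $\le 0$, the inequality reads $\delta(u)+\delta(v)+\delta(w)\ge 6$ with $\delta(u)\le 2$, so $\delta(v)=\delta(w)=2$ and (3) follows; the same device proves (2), since there $\delta(w),\delta(x)\le 1$ (defect $2$ at $w$ would put $v$ adjacent to the predecessor of $w$, contradicting $d(u,v)=d(u,w)+1$), forcing $\delta(v)=2$, i.e.\ $w$ and $x$ adjacent. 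You would also still need to justify that minimal-area diagrams are reduced and nondegenerate — that interior link cycles embed in links of $Y$ and that adjacency in $D$ descends to adjacency in $Y$ — which is essentially the content of the Januszkiewicz--{\'S}wi{\c{a}}tkowski results the paper cites in place of a proof.
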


\subsection{Quadric complexes}

\begin{figure}
  \begin{subfigure}{\textwidth}
    \centering
    \begin{tikzpicture}
      \tikzstyle{vertex} = [circle,minimum size=0.15cm,inner sep=0,fill=black];
      \begin{scope}
        \node[vertex,label={above:$u_0$}] (u0) at (1, 2) {};
        \node[vertex,label={below:$u_1$}] (u1) at (1, 0) {};
        \node[vertex,label={left:$v_0$}] (v0) at (0, 1) {};
        \node[vertex,label={left:$v_1$}] (v1) at (1, 1) {};
        \node[vertex,label={right:$v_2$}] (v2) at (2, 1) {};

        \foreach \u in {u0,u1}
          \foreach \v in {v0,v1,v2}
            \draw[thick] (\u) -- (\v);
      \end{scope}

      \draw[-implies, double equal sign distance] (3,1) -- (4,1);
      
      \begin{scope}[xshift=5cm]
        \node[vertex,label={above:$u_0$}] (u0) at (1, 2) {};
        \node[vertex,label={below:$u_1$}] (u1) at (1, 0) {};
        \node[vertex,label={left:$v_0$}] (v0) at (0, 1) {};
        \node[vertex,label={right:$v_2$}] (v2) at (2, 1) {};

        \foreach \u in {u0,u1}
        \foreach \v in {v0,v2}
        \draw[thick] (\u) -- (\v);
      \end{scope}
    \end{tikzpicture}
    \caption{}
    \figlabel{repl2}
  \end{subfigure}
  \\
  \begin{subfigure}{\textwidth}
    \centering
    \begin{tikzpicture}
      \tikzstyle{vertex} = [circle,minimum size=0.15cm,inner sep=0,fill=black];

      \begin{scope}
        \node[vertex,label={above right:$u$}] (u) at (0, 0) {};
        \node[vertex,label={above:$b_0$}] (b0) at (30:1) {};
        \node[vertex,label={above:$a_0$}] (a0) at (90:1) {};
        \node[vertex,label={above:$b_2$}] (b2) at (150:1) {};
        \node[vertex,label={below:$a_2$}] (a2) at (210:1) {};
        \node[vertex,label={below:$b_1$}] (b1) at (270:1) {};
        \node[vertex,label={below:$a_1$}] (a1) at (330:1) {};

        \foreach \v in {a0,a1,a2}
          \draw[thick] (\v) -- (u);

        \draw[thick] (a0) -- (b0);
        \draw[thick] (b0) -- (a1);
        \draw[thick] (a1) -- (b1);
        \draw[thick] (b1) -- (a2);
        \draw[thick] (a2) -- (b2);
        \draw[thick] (b2) -- (a0);
      \end{scope}

      \draw[-implies,double equal sign distance] (1.5,0) -- (2.5,0);
      
      \begin{scope}[xshift=4cm]
        \node[vertex,label={above:$b_0$}] (b0) at (30:1) {};
        \node[vertex,label={above:$a_0$}] (a0) at (90:1) {};
        \node[vertex,label={above:$b_2$}] (b2) at (150:1) {};
        \node[vertex,label={below:$a_2$}] (a2) at (210:1) {};
        \node[vertex,label={below:$b_1$}] (b1) at (270:1) {};
        \node[vertex,label={below:$a_1$}] (a1) at (330:1) {};

        \draw[thick] (a0) -- (b0);
        \draw[thick] (b0) -- (a1);
        \draw[thick] (a1) -- (b1);
        \draw[thick] (b1) -- (a2);
        \draw[thick] (a2) -- (b2);
        \draw[thick] (b2) -- (a0);

        \draw[thick] (b2) -- (a1);
      \end{scope}

      \node at (5.5,0) {,};

      \begin{scope}[xshift=7cm]
        \node[vertex,label={above:$b_0$}] (b0) at (30:1) {};
        \node[vertex,label={above:$a_0$}] (a0) at (90:1) {};
        \node[vertex,label={above:$b_2$}] (b2) at (150:1) {};
        \node[vertex,label={below:$a_2$}] (a2) at (210:1) {};
        \node[vertex,label={below:$b_1$}] (b1) at (270:1) {};
        \node[vertex,label={below:$a_1$}] (a1) at (330:1) {};

        \draw[thick] (a0) -- (b0);
        \draw[thick] (b0) -- (a1);
        \draw[thick] (a1) -- (b1);
        \draw[thick] (b1) -- (a2);
        \draw[thick] (a2) -- (b2);
        \draw[thick] (b2) -- (a0);

        \draw[thick] (a0) -- (b1);
      \end{scope}

      \node at (8.5,0) {,};

      \begin{scope}[xshift=10cm]
        \node[vertex,label={above:$b_0$}] (b0) at (30:1) {};
        \node[vertex,label={above:$a_0$}] (a0) at (90:1) {};
        \node[vertex,label={above:$b_2$}] (b2) at (150:1) {};
        \node[vertex,label={below:$a_2$}] (a2) at (210:1) {};
        \node[vertex,label={below:$b_1$}] (b1) at (270:1) {};
        \node[vertex,label={below:$a_1$}] (a1) at (330:1) {};

        \draw[thick] (a0) -- (b0);
        \draw[thick] (b0) -- (a1);
        \draw[thick] (a1) -- (b1);
        \draw[thick] (b1) -- (a2);
        \draw[thick] (a2) -- (b2);
        \draw[thick] (b2) -- (a0);

        \draw[thick] (a2) -- (b0);
      \end{scope}
    \end{tikzpicture}
    \caption{}
    \figlabel{repl3}
  \end{subfigure}
  \caption{Replacement rules for quadric complexes.}
  \figlabel{repl}
\end{figure}
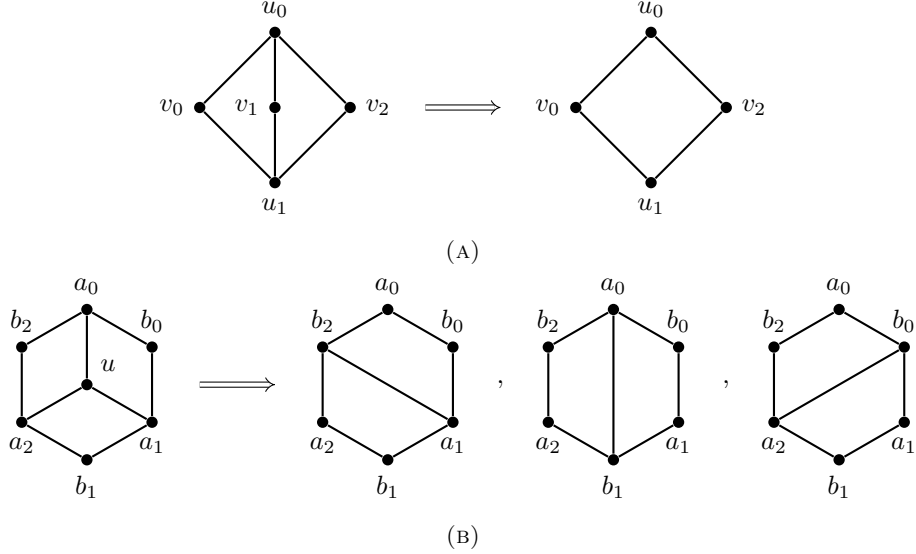

A \defterm{square complex} is a $2$-dimensional CW complex with a
simplicial $1$-skeleton such that the attaching maps of $2$-cells are
injective combinatorial maps from $4$-cycles.  The closed $2$-cells of
a square complex are called \defterm{squares}.  We assume that no two
squares of a square complex are glued to the same $4$-cycle of the
$1$-skeleton.  A \defterm{quadric complex} is a simply connected
square complex in which, for any subcomplex as on the left-hand side
of \Figref{repl}, there exists at least one subcomplex as on its
right-hand side having the same boundary path.  In other words,
quadric complexes are simply connected generalized $(4,4)$-complexes,
as defined by Wise \cite{Wise:2003}, that are built out of squares.
They were first studied in depth by Hoda \cite{Hoda:2017}.  A quadric
complex is a $\CAT(0)$ square complex if and only if its $1$-skeleton
is $K_{2,3}$-free.

\begin{thm}[Hoda \cite{Hoda:2017}]
  \thmlabel{ballisom} Let $X$ be a quadric complex.  Metric balls are
  isometrically embedded in $X^1$.
\end{thm}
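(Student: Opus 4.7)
My plan is to derive the theorem from a \emph{square condition} analogous to the Triangle Condition of \Lemitmrefp{systlem}{tricond}: whenever $x \in X^0$ has two non-adjacent neighbors $y, z$ satisfying $d(y, w) = d(z, w) = d(x, w) - 1$, there exists a vertex $x' \in X^0$ adjacent to both $y$ and $z$ with $d(x', w) = d(x, w) - 2$, filling out a square on $y, x, z, x'$.

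Given this square condition, the theorem follows by a peak-flattening argument. Take $u, v \in B_r(w)$ and any $X^1$-geodesic $\gamma = x_0 x_1 \cdots x_d$ from $u$ to $v$, and consider the height $h(i) := d(x_i, w)$. If $h(i) > r$ for some $i$, choose a strict local maximum, so $h(i-1), h(i+1) < h(i)$; since $\gamma$ is a geodesic, $x_{i-1}$ and $x_{i+1}$ must be non-adjacent, and they are both at height $h(i) - 1$. The square condition then supplies a vertex $x_i'$ adjacent to both $x_{i-1}$ and $x_{i+1}$ at height $h(i) - 2$, and replacing $x_i$ by $x_i'$ in $\gamma$ yields another geodesic with strictly smaller $\sum_j h(j)$. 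Iterating finitely often produces a geodesic from $u$ to $v$ lying entirely in $B_r(w)$, so $d_{B_r(w)^1}(u,v) = d_{X^1}(u,v)$.

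The substantive work is in the square condition. To establish it, I would fix geodesics $\alpha$ from $y$ to $w$ and $\beta$ from $z$ to $w$, each of length $d(x,w) - 1$, and, invoking simple connectedness of $X$, pass to a disc diagram $D$ of minimal area whose boundary is the cycle $y \to x \to z \to \beta \to w \to \alpha^{-1} \to y$. The plan is to show that the square of $D$ incident to the corner vertex $x$ has its fourth vertex at height $d(x,w) - 2$; this fourth vertex is the required $x'$. Minimality of $D$ rules out both forbidden configurations on the left-hand sides of \Figref{repl2} and \Figref{repl3}, since any such occurrence in $D$ could be shrunk via the replacement rules without changing the boundary, and a careful local analysis of the two squares meeting at $x$ forces the distances to $w$ to descend correctly.

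The main obstacle is precisely this disc diagram analysis: one must induct on $d(x, w)$ (so that $\alpha$ and $\beta$ themselves already lie, via the inductive hypothesis, in progressively smaller balls) and simultaneously handle how the $K_{2,3}$ replacement of \Figref{repl2} and the hexagonal replacement of \Figref{repl3} interact along the interior of $D$. Once this structural lemma is in hand, the rest of the proof is the short peak-flattening argument above.
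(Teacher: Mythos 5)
This statement is imported by the paper from \cite{Hoda:2017} without proof, so there is no internal argument to compare against; your proposal has to stand on its own. Its reduction step is sound: the ``square condition'' you postulate is precisely the Quadrangle Condition, which appears in this paper as \Lemref{quadcond} (your non-adjacency hypothesis is not needed there, and in your application it holds automatically since $X^1$ is bipartite, being the $1$-skeleton of a simply connected complex all of whose $2$-cells have even boundary length). The peak-flattening argument --- at a vertex of maximal height $>r$ on a geodesic the two neighbours along the geodesic are non-adjacent and one level lower, so the Quadrangle Condition lets you push the peak down and strictly decrease the total height --- is a correct and standard way to deduce isometric embedding of balls from that condition. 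Had you simply invoked \Lemref{quadcond}, which the paper derives from the bi-bridged/hereditary-modular characterization of quadric $1$-skeleta and not from \Thmref{ballisom}, so there is no circularity, the proof would be complete.

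As written, however, there is a genuine gap: the disc-diagram derivation of the square condition is a plan rather than a proof, and you acknowledge as much (``the main obstacle is precisely this disc diagram analysis''). Two load-bearing steps are missing. First, you assert that a minimal-area diagram $D$ has a single square incident to $x$ with $y,x,z$ as three consecutive corners; this amounts to showing $x$ has degree $2$ in $D$, which does not follow from minimality alone --- one needs a combinatorial Gauss--Bonnet count over $D$, using the replacement rules of \Figref{repl2} and \Figref{repl3} to bound interior and boundary vertex degrees, and one must also handle the fact that the boundary cycle $y,x,z,\beta,w,\alpha^{-1}$ need not be embedded. Second, even granting such a square $y,x,z,x'$, bipartiteness only yields $d(x',w)\in\{d(x,w),\,d(x,w)-2\}$, and nothing in the sketch excludes the level case; the induction on $d(x,w)$ you gesture at is exactly where that exclusion would have to happen, and it is not carried out. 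The architecture is right, but the key lemma is unproved unless you replace your sketch with the citation to \Lemref{quadcond}.
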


\begin{lem}[Quadrangle Condition]
  \lemlabel{quadcond} Let $X$ be a quadric complex.  Let
  $u,v,w,x \in Y^0$ such that $v$ and $w$ are adjacent to $x$ and $v$
  and $w$ are closer to $u$ than is $x$.  Then there exists
  $y \in X^0$ adjacent to $v$ and $w$ and closer to $u$ than are $v$
  and $w$.
\end{lem}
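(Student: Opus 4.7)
Set $d = d(u,x)$; by hypothesis $d(u,v) = d(u,w) = d-1$. The trivial cases are $v = w$ (any neighbour of $v$ nearer $u$ works) and $d \le 2$: when $d = 2$ both $v,w$ neighbour $u$ and $y := u$ is a common neighbour at distance $0 = d-2$. I will therefore assume $v \ne w$ and $d \ge 3$.

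The heart of the argument is the following observation. Although the $2$-path $v,x,w$ connects $v$ and $w$ in $X^1$, its midpoint $x$ lies \emph{outside} the ball $B_{d-1}(u)$, while its endpoints lie inside. By \Thmref{ballisom}, $B_{d-1}(u)$ is isometrically embedded in $X^1$, so $d_{X^1}(v,w) \le 2$ is realised by a path of length $\le 2$ inside the ball. Hence either $v \sim w$ in $X^1$, or $v$ and $w$ share a common neighbour $z \in B_{d-1}(u)$. In the latter situation the triangle inequality gives $d(u,z) \ge d(u,v) - 1 = d-2$, so $y := z$ already does the job whenever $d(u,z) = d-2$.

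To complete the proof I would need to rule out the two remaining possibilities: (i) $v \sim w$, and (ii) every common neighbour of $v,w$ inside $B_{d-1}(u)$ lies on $S_{d-1}(u)$. In each, pick neighbours $v' \sim v$ and $w' \sim w$ on $S_{d-2}(u)$, which exist since $d \ge 3$. If $v' = w'$, or $v' \sim w$, or $w' \sim v$, we are done, so suppose otherwise. Then in case (ii) the five vertices $v,w,v',z,x$---with $v$ adjacent to each of $v',z,x$ and $w$ adjacent to $z,x$---together with the (still missing) edge $v'w$ supplied by applying \Thmref{ballisom} to the smaller ball $B_{d-2}(u)$ exhibit a $K_{2,3}$-configuration matching the left-hand side of \Figref{repl2}; the replacement rule then produces a square whose fourth vertex lies on $S_{d-2}(u)$ and is adjacent to both $v$ and $w$, contradicting the assumption. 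Case (i) is handled analogously, but using the cone-on-hexagon configuration of \Figref{repl3} assembled out of $v,w,v',w'$ and two further vertices on $S_{d-1}(u)$.

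The main obstacle I expect is this last case analysis: correctly fitting the encountered subcomplex into the left-hand side of the appropriate replacement rule, and verifying that the new edge or square produced by the rule is forced to meet $S_{d-2}(u)$ rather than $S_{d-1}(u)$ or $S_d(u)$. A cleaner alternative is to induct on $d$, using \Thmref{ballisom} together with a single application of \Figref{repl2} in the inductive step to push the problem from $B_d(u)$ down to $B_{d-1}(u)$.
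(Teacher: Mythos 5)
Your opening two paragraphs are a sound start, and they take a genuinely different route from the paper: the paper does not argue directly at all, but deduces the lemma from the characterization of $1$-skeleta of quadric complexes as hereditary modular graphs (Hoda's bi-bridged result plus Bandelt's theorem), for which the quadrangle condition is standard. Your idea of instead using \Thmref{ballisom} applied to $B_{d-1}(u)$ to find a common neighbour $z$ of $v$ and $w$ inside that ball is correct and, properly finished, gives a more elementary proof. But the finish is where the gap lies. The remaining cases you try to dispatch are handled incorrectly: the five vertices $v,w,v',z,x$ do \emph{not} form a $K_{2,3}$ precisely because the edge $v'w$ is absent, and \Thmref{ballisom} cannot ``supply'' an edge --- it is a statement about distances, not about creating adjacencies, and you had just assumed $v'\not\sim w$. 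Worse, even granting a $K_{2,3}$, the replacement rule of \Figref{repl2} does not produce a new vertex on $S_{d-2}(u)$; it only asserts that the outer $4$-cycle of an existing $K_{2,3}$ bounds a square, so it can never manufacture the vertex $y$ you need. The sketch for case (i) via \Figref{repl3} suffers from the same problem and is too vague to check. As written, the proof is incomplete.

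The irony is that both of your problem cases are vacuous for a reason you never invoke: $X^1$ is bipartite (the paper uses this in the proof of \Thmref{intisomet}; it holds because $X$ is simply connected and every $2$-cell has a boundary of even length, so every cycle in $X^1$ is even). Bipartiteness kills case (i) outright, since $v$ and $w$ are equidistant from $u$ and hence cannot be adjacent; and in case (ii) any common neighbour $z$ of $v$ and $w$ satisfies $d(u,z)\in\{d-2,d\}$, so $z\in B_{d-1}(u)$ forces $d(u,z)=d-2$, i.e.\ $y:=z$ always works. With that one observation your second paragraph already completes the proof and the entire third paragraph can be deleted.
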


\Lemref{quadcond} follows from the fact that the $1$-skeleta of
quadric complexes are precisely the hereditary modular graphs.  This
characterization is due to the result of Hoda that the $1$-skeleta are
the bi-bridged graphs \cite{Hoda:2017} and the theorem of Bandelt that
a graph is bi-bridged iff it is hereditary modular
\cite[Theorem~1]{Bandelt:1988}.

Let $X$ be a quadric complex.  The \defterm{interval} $I(u,v)$ between
a pair of vertices $u$ and $v$ in $X^0$ is the full subcomplex induced
by the union of the geodesics between $u$ and $v$.

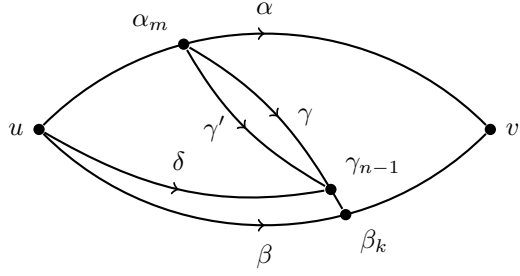
\begin{figure}
  \centering
  \begin{tikzpicture}
    \tikzstyle{vertex} = [circle,minimum size=0.15cm,inner sep=0,fill=black];

    \node[vertex,label={left:$u$}] (u) at (0,0) {};
    \node[vertex,label={right:$v$}] (v) at (6,0) {};

    \draw[thick,postaction={decorate},decoration={markings,mark=at
      position 1/2 with {\arrow{>},\node[label={above:$\alpha$}]
        {};},mark=at position 1/3 with {\node[vertex,label={above
          left:$\alpha_m$}] (am) {};}}] (u) to [out=45,in=135] (v);

    \draw[thick,postaction={decorate},decoration={markings,mark=at
      position 1/2 with {\arrow{>},\node[label={below:$\beta$}]
        {};},mark=at position 2/3 with {\node[vertex,label={below
          right:$\beta_k$}] (bk) {};}}] (u) to [out=315,in=225] (v);

    \draw[thick,postaction={decorate},decoration={markings,mark=at
      position 1/2 with {\arrow{>},\node[label={right:$\gamma$}]
        {};},mark=at position 9/10 with {\node[vertex,label={above
          right:$\gamma_{n-1}$}] (gnm1) {};}}] (am) to
    [out=330,in=120] (bk);

    \draw[thick,postaction={decorate},decoration={markings,mark=at
      position 1/2 with {\arrow{>},\node[label={above:$\delta$}]
        {};}}] (u) to [out=330,in=190] (gnm1);

    \draw[thick,postaction={decorate},decoration={markings,mark=at
      position 1/2 with {\arrow{>},\node[label={left:$\gamma'$}]
        {};}}] (am) to [out=300,in=150] (gnm1);
  \end{tikzpicture}
  \caption{Two geodesics $\alpha$ and $\beta$ between a pair of
    vertices $u$ and $v$ of a quadric complex along with a geodesic
    $\gamma$ joining $\alpha_m \in \alpha$ and $\beta_k \in \beta$.
    Used in the proof of \Thmref{intisomet}.}
  \figlabel{intisomet}
\end{figure}

\begin{thm}
  \thmlabel{intisomet} Let $X$ be a quadric complex and let
  $u,v \in X^0$.  The $1$-skeleton of $I(u,v)$ is isometrically
  embedded in $X^1$.
\end{thm}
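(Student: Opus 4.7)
My plan is to argue by induction on $n = d_X(p, q)$, writing $p = \alpha_m$ and $q = \beta_k$ along geodesics $\alpha, \beta$ from $u$ to $v$. The cases $n \le 1$ are immediate since $I(u,v)$ is a full subcomplex of $X$.

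For the inductive step, fix a geodesic $\gamma = \gamma_0 \cdots \gamma_n$ from $p$ to $q$ in $X^1$. I aim to find a neighbour $q' \in I(u,v)$ of $q$ with $d_X(p, q') = n - 1$; the inductive hypothesis applied to the pair $(p, q')$ then yields a geodesic in $I(u,v)^1$ of length $n - 1$, and extending by the edge $\{q', q\}$ produces the desired geodesic of length $n$.

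If $\gamma_{n-1} \in I(u,v)$, set $q' = \gamma_{n-1}$. The genuinely difficult case is $\gamma_{n-1} \notin I(u,v)$, i.e., $d(u, \gamma_{n-1}) + d(\gamma_{n-1}, v) > d(u, v)$. Here I would use the auxiliary geodesics $\delta$ from $u$ to $\gamma_{n-1}$ and $\gamma'$ from $p$ to $\gamma_{n-1}$ pictured in \Figref{intisomet} and repeatedly apply the Quadrangle Condition (\lemref{quadcond}) to the configuration at $\gamma_{n-1}$ formed by $u$, $q$, and the penultimate vertex of $\delta$, producing a common neighbour of $q$ that is strictly closer to $u$ than $\gamma_{n-1}$ (or, symmetrically, closer to $v$). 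Iterating strictly decreases the defect $d(u, \gamma_{n-1}) + d(\gamma_{n-1}, v) - d(u, v)$ and terminates at the required $q' \in I(u,v)$.

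The main obstacle will be maintaining $d_X(p, q') = n - 1$ throughout these Quadrangle Condition moves, since \lemref{quadcond} only controls distance to $u$ (or $v$) and not to $p$. I expect to handle this by tracking $\gamma'$ in parallel with $\delta$, invoking \Thmref{ballisom} to certify that all intermediate vertices stay within the metric sphere $S_{n-1}(p)$, and, if necessary, running a secondary induction on the defect nested inside the primary induction on $n$.
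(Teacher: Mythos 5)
Your induction skeleton matches the paper's argument (read forwards rather than as a minimal counterexample): everything reduces to producing a neighbour $q'$ of $q=\beta_k$ that lies in $I(u,v)$ and satisfies $d(p,q')=n-1$. The gap is exactly where you suspect it is, and your proposed repair does not close it. In your difficult case a single application of \Lemref{quadcond} (no iteration is needed) does produce a neighbour $y$ of $q$ with $d(u,y)=k-1$, and such a $y$ automatically lies in $I(u,v)$ because $u \to y \to \beta_k \to \cdots \to v$ has length $\ell$. But $d(p,y)$ is completely uncontrolled: since $X^1$ is bipartite it equals $n-1$ or $n+1$, and nothing in the Quadrangle Condition rules out $n+1$, in which case the inductive hypothesis cannot be applied. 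Your suggested fixes do not help: \Thmref{ballisom} says $B_{n-1}(p)$ is isometrically embedded, but it does not force the vertex produced by \Lemref{quadcond} to lie in that ball, and a secondary induction on the defect decreases $d(u,\cdot)$-type quantities while leaving $d(p,\cdot)$ untouched.

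The missing idea is to arrange that the difficult case never occurs, which uses \Thmref{ballisom} at a different spot and no Quadrangle Condition at all. Assume without loss of generality $m\le k$, so that both $p$ and $q$ lie in $B_k(u)$. By \Thmref{ballisom} the geodesic $\gamma$ from $p$ to $q$ may be chosen inside $B_k(u)$; then $d(u,\gamma_{n-1})\le k$, and bipartiteness of $X^1$ forces $d(u,\gamma_{n-1})=k-1$. Hence concatenating a geodesic $\delta$ from $u$ to $\gamma_{n-1}$ with the edge $\gamma_{n-1}\beta_k$ and the tail $(\beta_i)_{i=k}^{\ell}$ gives a path of length $\ell$, i.e.\ a geodesic from $u$ to $v$ through $\gamma_{n-1}$, so $\gamma_{n-1}\in I(u,v)$; and $d(p,\gamma_{n-1})=n-1$ for free since $\gamma_{n-1}$ is the penultimate vertex of $\gamma$. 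Take $q'=\gamma_{n-1}$. One last point of bookkeeping: $q'$ lies on the new geodesic $\beta'$ rather than on $\beta$, so your inductive hypothesis must be quantified over all pairs of vertices of $I(u,v)$ (equivalently, over all quadruples $(\alpha,\beta,m,k)$, as in the paper's minimization), not merely over pairs on the two geodesics you started with.
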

\begin{proof}
  Suppose not.  Then there are geodesics $(\alpha_i)_{i=0}^{\ell}$ and
  $(\beta_i)_{i=0}^{\ell}$ from $u$ to $v$ and indices $m$ and $k$
  such that no geodesic $(\gamma_i)_{i=0}^n$ from $\alpha_m$ to
  $\beta_k$ is contained in $I(u,v)$.  Choose $(\alpha_i)_i$,
  $(\beta_i)_i$, $m$ and $k$ so as to minimize
  $n = d(\alpha_m,\beta_k)$.  Without loss of generality, $m \le k$.
  
  By \Thmref{ballisom} we may assume that $(\gamma_i)_i$ is contained
  in $B_k(u)$.  Hence, since $X^1$ is bipartite,
  $d(u,\gamma_{n-1}) = k - 1$.  Let $(\delta_i)_{i=0}^{k-1}$ be a
  geodesic from $u$ to $\gamma_{n-1}$.  Let $(\beta'_i)_{i=0}^{\ell}$
  be the concatenation of the sequences $(\delta_i)_{i=0}^{k-1}$ and
  $(\beta_i)_{i=k}^{\ell}$.  Then $(\beta'_i)_i$ is a geodesic from
  $u$ to $v$.  By the minimality of
  $\bigl((\alpha_i)_i, (\beta_i)_i, m, k\bigr)$, there is a geodesic
  $(\gamma'_i)_{i=0}^{n-1}$ from $\alpha_m$ to
  $\beta'_{k-1} = \gamma_{n-1}$ such that $(\gamma'_i)_i$ is contained
  in $I(u,v)$.  But then appending $\gamma_n$ to $(\gamma'_i)_i$ we
  obtain a geodesic from $\alpha_m$ to $\beta_k$ that is contained in
  $I(u,v)$.  This is a contradiction.
\end{proof}

\begin{cor}
  \corlabel{intquad} Intervals in quadric complexes are quadric.
\end{cor}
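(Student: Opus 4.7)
The plan is to verify, for $I := I(u,v)$, each of the three defining properties of a quadric complex in turn: that $I$ is a square complex, that it is simply connected, and that it satisfies the replacement rules of \Figref{repl}. The square complex structure is inherited from $X$ immediately, since $I$ is defined as a full subcomplex.

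For the replacement rules, I would argue by inspection of \Figref{repl} that each right-hand side involves no vertices that are not already present on the corresponding left-hand side. Thus, given a subcomplex of $I$ matching a left-hand side, the quadric property of $X$ produces a right-hand side completion in $X$ whose cells have all their vertices in $I^0$; by fullness of the subcomplex $I$, the entire completion lies in $I$. This step is essentially bookkeeping and should be short.

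The main obstacle is the simple connectedness of $I$. My plan is to leverage \Thmref{intisomet}: a loop $\gamma$ in $I^1$ bounds a disk diagram $D \to X$ because $X$ is simply connected, and the task is to modify $D$ so that it lands in $I$. I would do this by local surgery using the Quadrangle Condition (\Lemref{quadcond}). An interior vertex $x$ of $D$ fails to lie in $I^0$ exactly when $d(u,x) + d(x,v) > d(u,v)$; in this case the Quadrangle Condition, applied with respect to $u$ (or symmetrically to $v$) at a pair of neighbours of $x$ in the diagram closer to $u$, produces a vertex strictly closer to $u$ with which one can replace the offending portion of $D$. The natural complexity measure $\sum_{x} \bigl(d(u,x) + d(x,v) - d(u,v)\bigr)$, summed over interior vertices of $D$, should strictly decrease under each such surgery, forcing termination and yielding a disk diagram in $I$. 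The delicate point will be checking that the surgery can always be performed consistently across the 2-cell structure of $D$, and that the resulting configuration still defines a valid disk diagram whose boundary remains $\gamma$; this is where I expect the bulk of the technical work to go.
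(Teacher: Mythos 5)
The square-complex and replacement-rule parts of your plan are sound: the right-hand sides in \Figref{repl} introduce no vertices beyond those on the left-hand sides, so fullness of $I(u,v)$ does let you import the completions from $X$. The genuine gap is in the step you yourself defer, simple connectedness, and it is not a routine technicality. As described, the surgery is not well defined: the Quadrangle Condition (\Lemref{quadcond}) requires \emph{two} neighbours of the offending vertex $x$ that are both closer to $u$ (or both closer to $v$), and among the neighbours of $x$ in the diagram $D$ there need be no such pair --- an interior vertex can have a single diagram-neighbour closer to $u$ and a single one closer to $v$, with the rest farther from both. Even when the condition does apply, excising $x$ and inserting the new vertex $y$ forces you to re-fill the annular region between the old and new links with squares of $X$; nothing in the sketch guarantees that this region is fillable, nor that the filling introduces no new vertices with $d(u,\cdot)+d(\cdot,v)>d(u,v)$, so the claimed strict decrease of your complexity measure is unestablished. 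Since simple connectedness is the entire content of the corollary beyond what \Thmref{intisomet} already gives, the proof is incomplete at its central point.

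The paper sidesteps all of this with a one-line argument that is available to you: the $1$-skeleta of quadric complexes are precisely the hereditary modular graphs (\cite{Hoda:2017} combined with \cite[Theorem~1]{Bandelt:1988}, as quoted just after \Lemref{quadcond}), and this class is by definition closed under passing to isometric subgraphs. Because \Thmref{intisomet} shows that $I(u,v)^1$ is isometrically embedded and $I(u,v)$ is a full subcomplex, the interval is itself quadric --- simple connectedness included. If you insist on a hands-on diagrammatic proof you would in effect be re-proving part of that characterization; it can be done via disc-diagram reduction in the style of generalized $(4,4)$ small cancellation, but that requires a careful analysis of corners and of the replacement moves of \Figref{repl} acting on diagrams, not a vertex-by-vertex pushing argument.
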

\begin{proof}
  Quadric complexes are characterized by metric properties of their
  $1$-skeleta \cite{Hoda:2017}, so an isometrically embedded full
  subcomplex of a quadric complex is quadric.
\end{proof}

Fix a basepoint $\ast \in X^0$.  If, for all $v \in X^0$, the interval
$I(\ast, v)$ is a $\CAT(0)$ square complex then $(X,\ast)$
\defterm{has flat intervals}.  By \Corref{intquad}, $(X,\ast)$ has
flat intervals if and only if every $I(\ast,v)$ is $K_{2,3}$-free.

We now describe how we apply the results of Brodzki et al.\
\cite{Brodzki:2009} in the special case of $2$-dimensional $\CAT(0)$
cube complexes to our present situation.

Let $(X,\ast)$ be a based quadric complex with flat intervals.  Let
$Z_v = I(\ast,v)$ be a $\CAT(0)$ square complex interval in a quadric
complex.  We will describe the weight function $f_{n,v}$ of Brodzki et
al.\ \cite{Brodzki:2009} for $v$ in $Z_v$.  For $w \in Z_v^0$, let
$\rho(w)$ be the number of neighbours of $w$ in $Z_v^1$ that lie on
geodesics from $w$ to $\ast$.  The \defterm{deficiency} of
$w \in Z_v^0$ is defined as follows.
\[\delta(w) = 2 - \rho(w) \]
Define $f_{n,v}\colon Z_v^0 \to \N$ as follows.\[ f_{n,v}(w) =
\begin{cases}
  0 & \text{if $d(w,v) > n$} \\
  1 & \text{if $d(w,v) \le n$ and $\delta(w) = 0$} \\
  n - d(w,v) + 1 & \text{if $d(w,v) \le n$ and $\delta(w) = 1$} \\
  \frac{1}{2} \bigl(n - d(w,v) + 2\bigr)\bigl(n - d(w,v) + 1\bigr) &
  \text{if $d(w,v) \le n$ and $\delta(w) = 2$}
\end{cases}
\]
We extend $f_{n,v}$ by zeroes to all of $X^0$.  Note that if $v'$ is a
neighbour of $v$, then $Z_{v'} \subseteq Z_v$ or
$Z_v \subseteq Z_{v'}$, say the latter, and that $Z_v$ and $Z_{v'}$
are both intervals of $\ast$ in $Z_{v'}$, which by flatness is a
$\CAT(0)$ square complex.  So we may apply the results of Brodzki et
al.\ that
\[ ||f_{n,v}||_1 = \frac{1}{2}(n+2)(n+1) \]
\cite[Proposition~3.10]{Brodzki:2009} and for a neighbour $v'$ of
$v$, \[ ||f_{n,v} - f_{n,v'}||_1 = 2(n+1) \]
\cite[Proposition~3.11]{Brodzki:2009} and so we have the following.

\begin{thm}
  \thmlabel{qfia} Let $X$ be a quadric complex.  If there exists
  $\ast \in X^0$ such that $(X,\ast)$ has flat intervals then $X$
  satisfies Property~A.
\end{thm}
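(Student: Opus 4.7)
The plan is to verify directly the two conditions of the characterization of Property~A recalled in Section~2.1 for the family of weight functions $\{f_{n,v}\}$ defined immediately above the statement. Setting $C_n = n$, the support condition is immediate from the case split in the definition of $f_{n,v}$, so the bulk of the work is to show that
\[ \frac{\|f_{n,v} - f_{n,v'}\|_1}{\|f_{n,v}\|_1} \longrightarrow 0 \]
uniformly over edges $\{v,v'\}$ of $X^1$.

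The first substantive step is to justify the observation in the preamble that for any edge $\{v,v'\}$ we may assume, after possibly swapping the two vertices, that $Z_v \subseteq Z_{v'}$. Since $X^1$ is bipartite one has $d(\ast,v)\neq d(\ast,v')$, and the vertex closer to $\ast$ lies on some geodesic from $\ast$ to the other; so both $Z_v$ and $Z_{v'}$ are intervals of $\ast$ in $Z_{v'}$, which by the flat intervals hypothesis is a $\CAT(0)$ square complex. I would next remark that for any $w \in Z_v$ the set of neighbours of $w$ lying on a geodesic to $\ast$ is contained in $I(\ast,w) \subseteq Z_v$, so the deficiency $\delta(w)$ used to define $f_{n,v}$ is unaffected by whether it is computed in $Z_v$ or in the ambient $Z_{v'}$. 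Consequently $f_{n,v}$ coincides with the weight function of Brodzki et al.\ associated to $v$ when viewed as a vertex of the $\CAT(0)$ square complex $Z_{v'}$.

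With this identification, I would invoke \cite[Propositions~3.10 and~3.11]{Brodzki:2009} inside $Z_{v'}$ to conclude
\[ \|f_{n,v}\|_1 = \tfrac{1}{2}(n+2)(n+1), \qquad \|f_{n,v} - f_{n,v'}\|_1 = 2(n+1). \]
Since both $f_{n,v}$ and $f_{n,v'}$ are supported in $Z_{v'}$ and extended by zero to $X^0$, these $\ell^1$-norms computed in $Z_{v'}$ agree with those computed in $X$. The ratio thus simplifies to $4/(n+2)$, which tends to $0$ independently of the edge $\{v,v'\}$, completing the verification.

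The main obstacle is conceptual rather than computational: one must check that although $X$ itself need not be a $\CAT(0)$ square complex, every adjacent pair of weight functions can be simultaneously realized inside a single $\CAT(0)$ square complex interval where the quantitative estimates of \cite{Brodzki:2009} apply verbatim. Once this reduction is in place through the nested-intervals observation above, no further estimation is needed.
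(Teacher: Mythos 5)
Your proposal is correct and takes essentially the same approach as the paper: the theorem is obtained exactly by the nested-intervals observation ($Z_v \subseteq Z_{v'}$ for adjacent $v,v'$, both being intervals of $\ast$ in the $\CAT(0)$ square complex $Z_{v'}$) followed by an application of Propositions~3.10 and~3.11 of Brodzki et al., yielding the uniform ratio $4/(n+2) \to 0$. Your added justifications of the nesting via bipartiteness and of the independence of the deficiency from the ambient interval are points the paper leaves implicit, and they are accurate.
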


\section{The squaring of
  \texorpdfstring{$2$-dimensional}{2-dimensional} systolic complexes}

Let $(Y,\ast)$ be a $2$-dimensional systolic complex with a basepoint
$\ast \in Y^0$.  Let $X_Y^1$ be the subgraph of $Y^1$ given by the
union of all edges whose endpoints are not equidistant to $u$.  Note
that $X_Y^1$ is bipartite The \defterm{squaring} of $(Y,\ast)$ is the
based square complex $(X_Y,\ast)$ obtained from $X_Y^1$ by attaching a
unique square along its boundary to each embedded $4$-cycle of
$X_Y^1$.

\begin{thm}
  \thmlabel{sqqi} Let $(Y,\ast)$ be a based $2$-dimensional systolic
  complex and let $(X_Y,\ast)$ be the squaring of $(Y,\ast)$.  Then
  $Y^1$ is quasi-isometric to $X_Y^1$.
\end{thm}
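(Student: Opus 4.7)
The plan is to show that the identity map on the common vertex set $Y^0 = X_Y^0$ is a quasi-isometry of $1$-skeleta. Since $X_Y^1$ is by definition a subgraph of $Y^1$ with the same vertex set, the inclusion is $1$-Lipschitz, so $d_{Y^1} \le d_{X_Y^1}$ holds automatically; the content of the theorem is the reverse linear bound.

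For this reverse bound, I would take any $Y^1$-geodesic from $v$ to $w$ and replace each of its edges that does not already lie in $X_Y^1$ by a length-$2$ path in $X_Y^1$. By definition of $X_Y^1$, the offending edges $\{a, b\}$ are precisely those whose endpoints are equidistant to $\ast$, which is exactly the hypothesis of the Triangle Condition \Lemitmrefp{systlem}{tricond} applied with $u = \ast$. This produces a vertex $x$ adjacent to both $a$ and $b$ with $d_Y(\ast, x) < d_Y(\ast, a) = d_Y(\ast, b)$, so the two edges $\{x, a\}$ and $\{x, b\}$ have endpoints at distinct distances from $\ast$ and therefore both lie in $X_Y^1$, supplying the required detour.

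Performing this replacement for every equidistant edge along a $Y^1$-geodesic of length $n$ yields a walk in $X_Y^1$ of length at most $2n$, so $d_{X_Y^1}(v, w) \le 2\, d_{Y^1}(v, w)$. Combined with the trivial lower bound, the identity on vertices is a $(2, 0)$-quasi-isometric bijection $Y^0 \to X_Y^0$, which extends to a quasi-isometry of $1$-skeleta since the vertex sets are $\tfrac{1}{2}$-dense in either metric graph. There is no substantial obstacle: the whole argument reduces to a single invocation of the Triangle Condition, and the only point worth verifying is that the two replacement edges are necessarily non-horizontal, which is immediate from the strict inequality $d_Y(\ast, x) < d_Y(\ast, a)$ supplied by \Lemitmrefp{systlem}{tricond}.
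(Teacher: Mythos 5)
Your proposal is correct and is essentially identical to the paper's (much terser) proof: both apply the Triangle Condition \Lemitmrefp{systlem}{tricond} with $u=\ast$ to each edge of $Y^1$ whose endpoints are equidistant to $\ast$, replacing it by the two remaining edges of the resulting triangle, which necessarily lie in $X_Y^1$, to get the factor-$2$ bound in the nontrivial direction. The paper leaves implicit the points you spell out (the trivial inclusion bound and the non-horizontality of the two replacement edges), but the argument is the same.
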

\begin{proof}
  Applying \Lemitmrefp{systlem}{tricond} to $\ast$ and an edge $e$ of
  $Y^1$ that is not contained in $X_Y^1$ gives us a triangle, one of
  whose edges is $e$ and whose remaining edges are contained in
  $X_Y^1$.  This ensures that distances in $X_Y^1$ increase by at most
  a factor of two relative to distances in $Y^1$.
\end{proof}

\begin{thm}
  \thmlabel{sqquad} Let $(Y,\ast)$ be a based $2$-dimensional systolic
  complex.  The squaring $(X_Y,\ast)$ of $(Y,\ast)$ is quadric.
\end{thm}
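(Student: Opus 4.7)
The proof has two parts: showing $X_Y$ is simply connected, and verifying the two replacement rules of \Figref{repl}. The key setup, used in both, is that $X_Y^1$ is bipartite---the parts being determined by parity of distance from $\ast$---combined with \Lemitmrefp{systlem}{sphtfree}, which ensures that every triangle of $Y$ contains exactly one \emph{horizontal} edge, i.e.\ an edge whose endpoints are equidistant from $\ast$ (equivalently, an edge of $Y^1 \setminus X_Y^1$).

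For simple connectedness, let $\gamma$ be a closed edge-path in $X_Y^1$. Since $Y$ is simply connected, $\gamma$ bounds a simplicial disc diagram $D \to Y$. Every horizontal edge of $D$ is interior, because $\gamma$ contains none, and so is shared by two triangles $abp$ and $abq$ where $ab$ is the horizontal edge. The girth $\geq 6$ condition on the link of $a$ forces $p$ and $q$ to be non-adjacent in $Y$: otherwise $b, p, q$ would span a triangle in the link of $a$. Thus $apbq$ is an embedded $4$-cycle in $X_Y^1$ and, by construction of $X_Y$, bounds a square there. Pairing the triangles of $D$ across their unique horizontal edges tiles $D$ by such $4$-cycles, which assemble into a square disc diagram for $\gamma$ in $X_Y$.

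For the replacement rules, rule~(a) (the $K_{2,3}$ configuration on the left of \Figref{repl}) is immediate: its outer $4$-cycle $u_0 v_0 u_1 v_2$ is embedded in $X_Y^1$ and so bounds a square in $X_Y$ by construction. Rule~(b) (the ``windmill'' of three squares sharing a central vertex $u$ with outer hexagon $a_0 b_0 a_1 b_1 a_2 b_2$) is the main obstacle. We must show that at least one of the diagonal edges $b_2 a_1$, $a_0 b_1$, $a_2 b_0$ appears in $X_Y^1$, for the resulting embedded $4$-cycle then bounds a square in $X_Y$ by construction. The approach is a local analysis of the link of $u$ in $Y$: this link contains the pairwise non-adjacent vertices $a_0, a_1, a_2$, and also each $b_i$ for which the horizontal edge $u b_i$ happens to lie in $Y$. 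Combining the girth $\geq 6$ link condition with the Triangle Condition \Lemitmrefp{systlem}{tricond} applied to $\ast$ and to the edges of the windmill is expected to force one of the diagonals. The crux is a careful case analysis based on which of the $b_i$ lie in the link of $u$ and on the distribution of distances from $\ast$ among the configuration's seven vertices.
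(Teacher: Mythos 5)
Your setup (bipartiteness of $X_Y^1$ and the unique horizontal edge per triangle via \Lemitmrefp{systlem}{sphtfree}) and your disposal of the first replacement rule match the paper, but the proposal has a genuine gap: the second replacement rule, which you yourself identify as ``the main obstacle,'' is never actually proved. You state that combining the link-girth condition with the Triangle Condition ``is expected to force one of the diagonals'' via ``a careful case analysis'' --- that case analysis is the content of the theorem, and it is missing. For what it is worth, the paper closes this step differently and more decisively: it shows the left-hand configuration of \Figref{repl3} cannot occur in $X_Y^1$ at all. Each of the three $4$-cycles $(u,a_i,b_i,a_{i+1})$ has a diagonal by $6$-largeness, and the girth condition on links forces these diagonals to be the edges $ub_i$, so $u$ is adjacent to every vertex of the outer hexagon $C$; then taking $v \in C$ furthest from $\ast$, \Lemitmrefp{systlem}{ballneighb} joins the two $C$-neighbours of $v$ by an edge, producing an embedded $5$-cycle in the link of $u$ --- a contradiction. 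Your intended route (exhibit a diagonal of the hexagon) is not obviously salvageable precisely because the hypothesis is vacuous, and a distance-based case analysis that does not notice this is likely to go in circles.

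There is also a smaller gap in the simple-connectedness argument. Your link-girth observation rules out $p$ and $q$ being \emph{adjacent}, but not $p$ and $q$ having the \emph{same image} in $Y$ (they are distinct in $D$, but $D \to Y$ need not be injective). If they coincide, the $4$-gon $apbq$ does not map to an embedded $4$-cycle of $X_Y^1$ and hence cannot be carried by a square of $X_Y$, so the square diagram $D'$ fails to map to $X_Y$. The paper handles this by choosing $D \to Y$ to minimize the number of triangles, which forces the star of each interior horizontal edge to embed in $Y$; you need this (or an equivalent reduction) before pairing triangles across horizontal edges.
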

\begin{proof}
  We need to show that $X_Y$ is simply connected and that, for every
  subgraph of $X_Y^1$ as in the left-hand side of \Figref{repl3}, a
  pair of antipodal vertices in the outer $6$-cycle is joined by an
  edge.
  
  To show that $X_Y$ is simply connected it suffices to show that, for
  every embedded cycle $\alpha$ of $X_Y^1$, there is a $2$-dimensional
  square complex $D'$ homeomorphic to a $2$-dimensional disk and a
  combinatorial map $D' \to X_Y$ whose restriction to the boundary
  $\bd D'$ of $D'$ is $\alpha$.  By the van Kampen Lemma
  \cite[Proposition~9.2 of Section~III.9]{Lyndon:2001} since $Y$ is
  simply connected, there exists a $2$-dimensional simplicial complex
  $D$ homeomorphic to a $2$-disk $D$ and a combinatorial map $D \to Y$
  which restricts to $\alpha$ on the boundary.  Choose such $D \to Y$
  so as to minimize the number of triangles of $D$.  By
  \Lemitmrefp{systlem}{sphtfree}, each triangle of $D$ has a unique
  edge $e$ that is not contained in $X_Y^1$.  Then $e$ is contained in
  the interior of $D$ and, by the minimality of $D \to Y$, the star of
  $e$ is embedded in $Y$.  Let $D'$ be the result of deleting all such
  $e$ from $D^1$ and then spanning a square on each embedded
  $4$-cycle.  Since every embedded $4$-cycle of $X_Y$ spans a square,
  we may extend $(D')^1 \to (X_Y)^1$ to $D' \to X_Y$.  This proves
  that $X_Y$ is simply connected.

  Let $W$ be a subgraph of $X_Y^1$ as in the left-hand side of
  \Figref{repl3} and with the same vertex labels.  By $6$-largeness of
  $Y$, each of the embedded $4$-cycles of $W$ have a diagonal.  Since
  the girth of the link of $u$ is at least 6, these diagonals must
  join $u$ to each of the $b_i$.  Hence $u$ is adjacent to every
  vertex in the outer $6$-cycle $C$ of $W$.

  Let $v$ be a furthest vertex of $C$ from $\ast$. By
  \Lemitmrefp{systlem}{ballneighb}, the neighbours of $v$ in $C$ are
  joined by an edge.  But then there is a $5$-cycle in the link of $u$
  which contradicts the $2$-dimensional systolicity of $Y$.
\end{proof}

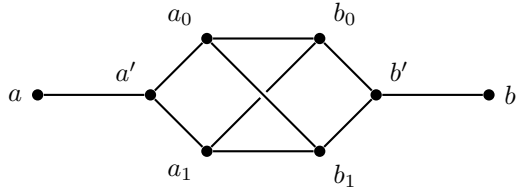
\begin{figure}
  \centering
  \begin{tikzpicture}[scale=1.5]
    \tikzstyle{vertex} = [circle,minimum size=0.15cm,inner sep=0,fill=black];
    \begin{scope}
      \node[vertex,label={above right:$b'$}] (bp) at (1,0) {};
      \node[vertex,label={above right:$b_0$}] (b0) at (1/2,1/2) {};
      \node[vertex,label={above left:$a_0$}] (a0) at (-1/2,1/2) {};
      \node[vertex,label={above left:$a'$}] (ap) at (-1,0) {};
      \node[vertex,label={below left:$a_1$}] (a1) at (-1/2,-1/2) {};
      \node[vertex,label={below right:$b_1$}] (b1) at (1/2,-1/2) {};

      \node[vertex,label={left:$a$}] (a) at (-2,0) {};
      \node[vertex,label={right:$b$}] (b) at (2,0) {};

      \draw[thick] (bp) -- (b0);
      \draw[thick] (b0) -- (a0);
      \draw[thick] (a0) -- (ap);
      \draw[thick] (ap) -- (a1);
      \draw[thick] (a1) -- (b1);
      \draw[thick] (b1) -- (bp);

      \draw[thick] (a1) -- (b0);
      \draw[thick,preaction={draw,line width=3.2pt,white}] (a0) -- (b1);
      
      \draw[thick] (bp) -- (b);
      \draw[thick] (ap) -- (a);
    \end{scope}
  \end{tikzpicture}
  \caption{A $K_{2,2}$ spanning the vertices $a_0$, $a_1$, $b_0$ and
    $b_1$ and embedded in a particular way in the interval $I(a,b)$ of
    a graph.  Such an embedding is not possible in the squaring of a
    based $2$-dimensional systolic complex.  Used in the proof of
    \Thmref{sqfi}.}
  \figlabel{tsqint}
\end{figure}

\begin{thm}
  \thmlabel{sqfi} Let $(Y,\ast)$ be a based $2$-dimensional systolic
  complex.  The squaring $(X_Y,\ast)$ of $(Y,\ast)$ has flat
  intervals.
\end{thm}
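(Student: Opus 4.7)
The plan is to show that every interval $I(\ast,w)$ in $X_Y$ is $K_{2,3}$-free; by \Corref{intquad} and the characterization recorded just before the theorem, this is equivalent to $(X_Y,\ast)$ having flat intervals.  I would argue by contradiction: from a supposed embedded $K_{2,3}$ in $I(\ast,w)$ I will produce a $3$-cycle in the link of some vertex of $Y$, contradicting the condition that links have girth at least $6$.  The two workhorses are \Lemitmrefp{systlem}{ballneighb}, to synthesize edges of $Y^1$ that are not present in $X_Y^1$, and flagness of the systolic complex $Y$, to promote those edges to $2$-simplices.

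Before the case analysis I would record a metric compatibility: any $Y^1$-geodesic from $\ast$ uses only non-level edges and so lies in $X_Y^1$, hence distances to $\ast$ agree in $Y^1$ and $X_Y^1$; and for any $v \in I(\ast,w)$ one obtains the identity $d_Y(v,w) = d_Y(\ast,w) - d_Y(\ast,v)$.  In particular, $X_Y^1$ is bipartite by parity of distance to $\ast$, and every edge of the supposed $K_{2,3}$ changes the level by exactly one.  Let the $K_{2,3}$ have sides $\{u_0,u_1\}$ and $\{v_0,v_1,v_2\}$ and set $d_0 = d(\ast,u_0)$.  The level constraints reduce to two cases: either (i) $d(\ast,u_1) = d_0$, with at least two $v_j$ at a common adjacent level by pigeonhole -- without loss of generality $v_0,v_1$ at $d_0 + 1$, the $d_0 - 1$ subcase being handled analogously by swapping the roles of $\ast$ and $w$ below; or (ii) $d(\ast,u_1) = d_0 \pm 2$, which forces all three $v_j$ to the middle level.

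In case (i), the vertices $u_0,u_1,v_0,v_1$ form a $K_{2,2}$ between levels $d_0$ and $d_0 + 1$.  Applying \Lemitmrefp{systlem}{ballneighb} with reference $\ast$ to the neighbours $u_0,u_1$ of $v_0$ (both closer to $\ast$) yields an edge $u_0 u_1$ in $Y$; applying it with reference $w$ to the neighbours $v_0,v_1$ of $u_0$ (both closer to $w$ by the metric identity above) yields $v_0 v_1$ in $Y$.  Flagness then fills the triangles $u_0 u_1 v_0$, $u_0 u_1 v_1$ and $u_0 v_0 v_1$, so $\{u_1,v_0,v_1\}$ spans a $3$-cycle in the link of $u_0$ -- a girth violation.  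In case (ii), say with $u_1$ at $d_0 + 2$, the three $v_j$ are common neighbours of $u_1$ all closer to $\ast$, so \Lemitmrefp{systlem}{ballneighb} makes them pairwise adjacent in $Y$; flagness then fills the triangles $u_0 v_i v_j$, and $\{v_0,v_1,v_2\}$ forms a $3$-cycle in the link of $u_0$.

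The main subtlety -- and the step I would want to get right first -- is the metric interplay between the two complexes: \Lemref{systlem} lives in $Y$, so its distance-based hypotheses must be verified in the $Y^1$-metric rather than in $X_Y^1$.  The identity $d_Y(v,w) = d_Y(\ast,w) - d_Y(\ast,v)$ for vertices in the interval makes this routine, after which the remaining case analysis and the standard appeal to flagness for systolic complexes are quick.
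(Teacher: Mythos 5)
Your argument is correct, and it follows the paper's overall strategy---reduce to showing that intervals $I(\ast,w)$ are $K_{2,3}$-free, stratify the $K_{2,3}$ by distance to $\ast$ using bipartiteness of $X_Y^1$, and convert the resulting configuration into a forbidden short cycle in a vertex link of $Y$---but the mechanism in the main case is genuinely different. Where you treat the case of $u_0,u_1$ equidistant to $\ast$ by applying \Lemitmrefp{systlem}{ballneighb} twice, once toward $\ast$ (to get the edge $u_0u_1$) and once toward $w$ (to get $v_0v_1$), the paper instead invokes the Quadrangle Condition (\Lemref{quadcond}) inside the quadric complex $X_Y$ to manufacture auxiliary common neighbours $a'$ of $a_0,a_1$ and $b'$ of $b_0,b_1$ on the far sides, and then applies $6$-largeness of $Y$ to the $4$-cycles $(a_0,a',a_1,b_0)$ and $(b_0,b',b_1,a_0)$, ruling out the unwanted diagonal because $a'$ and $b_0$ (resp.\ $b'$ and $a_0$) differ by $2$ in distance to $\ast$. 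Your route is shorter and stays entirely within the systolic toolbox, but it genuinely requires the metric compatibility you correctly single out: \Lemitmrefp{systlem}{ballneighb} is a statement about the $Y^1$-metric, so the hypothesis that $v_0,v_1$ are closer to $w$ than $u_0$ must be verified there, and your identity $d_Y(x,w)=d_Y(\ast,w)-d_Y(\ast,x)$ for $x\in I(\ast,w)$ does exactly that (it holds because $\ast$-distances agree in $Y^1$ and $X_Y^1$ while $x$ lies on an $X_Y^1$-geodesic from $\ast$ to $w$, which is also a $Y^1$-path, so $d_Y(x,w)\le d_{X_Y}(x,w)=d_Y(\ast,w)-d_Y(\ast,x)\le d_Y(x,w)$). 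The paper's detour through \Lemref{quadcond} buys it the ability to work with $X_Y$-distances only, at the cost of two auxiliary vertices and a diagonal-exclusion argument. The remaining case---the two-element side at distance $2$, forcing all three $v_j$ to the middle level---is essentially identical in both arguments: the $v_j$ become pairwise adjacent and form a $3$-cycle in the link of $u_0$. One cosmetic remark: like the paper's own proof, you use flagness of $Y$ to fill the triangles, which is implicit in the standard notion of a systolic complex though not spelled out in the paper's stated definition; this is consistent with the paper's usage and not a gap.
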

\begin{proof}
  Suppose there is a $K_{2,3}$ in an interval $I(\ast,v)$ of $X_Y$.
  Let $\{a_0,a_1\} \sqcup \{b_0, b_1, b_2\}$ be the bipartition of the
  $K_{2,3}$.  Some pair of vertices of $\{b_0, b_1, b_2\}$ are
  equidistant to $\ast$, say $\{b_0,b_1\}$.
  
  Consider the case where $a_0$ and $a_1$ are equidistant to $\ast$.
  Let $a$ be the closer of $\ast $ and $v$ to $a_0$ and let $b$ be the
  closer of $\ast$ and $v$ to $b_0$.  Let $a'$ and $b'$ be obtained by
  applying \Lemref{quadcond} to $a_0$, $a_1$ and $a$ and to $b_0$,
  $b_1$ and $b$, as in \Figref{tsqint}.  By $6$-largeness of $Y$, the
  $4$-cycle $(a_0, a', a_1,b_0)$ has a diagonal in $Y^1$.  Since
  $(a',a_0,b_0)$ is a geodesic, the diagonal must join $a_0$ and
  $a_1$.  Similarly, $b_0$ and $b_1$ are joined by edge in $Y^1$ and
  hence, by flagness, $\{a_0, a_1, b_0, b_1\}$ spans a $3$-simplex in
  $Y$.  This contradicts the $2$-dimensionality of $Y$.

  In the remaining case $a_0$ and $a_1$ are not equidistant to $\ast$.
  Then the $b_i$ must all be equidistant to $\ast$ with the
  $(a_0,b_i,a_1)$ all geodesics.  Applying a similar argument as in
  the previous case to the $4$-cycles $(a_0,b_i,a_1,b_j)$ we see that
  the $b_i$ span a triangle in $Y$ and so, together with $a_0$, they
  span a $3$-simplex contradicting, again, the $2$-dimensionality of
  $Y$.
\end{proof}

As an immediate consequence of \Thmref{sqqi}, \Thmref{sqquad},
\Thmref{sqfi} and \Thmref{qfia} we have the Main~Theorem.

\bibliographystyle{abbrv}
\bibliography{\jobname}
\end{document}